\newcommand{\Z}{\mathbb Z}
\newcommand{\Q}{\mathbb Q}
\newcommand{\C}{\mathbb C}
\newcommand{\N}{\mathbb N}
\newcommand{\bE}{\mathbb E}
\newcommand{\bM}{\mathbb M}
\newcommand{\PP}{\mathbb{P}}
\newcommand{\dt}{\bullet}
\newcommand{\cE}{\mathcal E}
\newcommand{\cO}{\mathcal O}
\newcommand{\cF}{\mathcal F}
\newcommand{\cL}{\mathcal L}
\newcommand{\fM}{\mathfrak M}
\newcommand{\fP}{\mathfrak P}
\newcommand{\uc}{\tilde{c}}
\newcommand{\cbar}{\overline{c}}
\newcommand{\GL}{\mathrm{GL}}
\newcommand{\PGL}{\mathrm{PGL}}
\newcommand{\Gm}{\mathbb{G}_m}
\renewcommand{\ss}{\mathrm{ss}}
\renewcommand{\SS}{\mathrm{SS}}
\newcommand{\St}{\mathrm{S}}
\newcommand\Hom{\operatorname{Hom}}
\newcommand\Spec{\operatorname{Spec}}
\newcommand\Quot{\operatorname{Quot}}
\newcommand{\cosk}{\operatorname{cosk}}
\numberwithin{equation}{section}
\newtheorem{theorem}{Theorem}[section]
\newtheorem{lemma}[theorem]{Lemma}
\newtheorem{proposition}[theorem]{Proposition}
\newtheorem{cor}[theorem]{Corollary}
\theoremstyle{remark}
\theoremstyle{definition}
\begin{document}

\title{Universal Chern classes on the moduli of  bundles}

\author{Donu Arapura}
\thanks{Author supported by a grant from the Simons foundation}
\address{Department of Mathematics\\ Purdue University\\
West Lafayette, IN 47907-2067}
\email{arapura@purdue.edu}

\date{\today}



\maketitle

\begin{abstract}
 The goal of this paper is to construct  universal cohomology classes  on the moduli space of stable bundles  over a curve
 when it is not a fine moduli space, i.e. when the rank and degree are not coprime. More precisely, we show that certain Chern classes of the universal bundle
 on the product of the curve with the moduli stack of bundles lift to the product of the curve with the moduli space of stable
 bundles.
 \end{abstract}

\section{Introduction}

This is an offshoot of some joint work with Richard Hain \cite{ah}. In both papers, we are concerned
with the moduli space $\fM_n(C,L)$ of semistable bundles of rank $n>1$ and fixed determinant $L$ on a smooth
projective curve $C$ of genus $g>1$.  The focus here is on the bad case, when $n$ and $\deg L$ are not coprime.
Then  $\fM_n(C,L)$ is usually
singular, and there is no universal or  Poincar\'e vector bundle on 
$C\times \fM_n(C,L)$ \cite{ramanan}. Nevertheless, it makes sense to  ask whether there are universal
Chern classes. One way to formulate this precisely is to consider the moduli stack
$\bM^{ss}_n(C,L)$ of  rank $n$ semistable bundles on  $C$ with  determinant $L$.  By its very nature, there is a universal
bundle $\bE^{ss}$ on  $C\times \bM^{ss}_n(C,L)$. There is also a $1$-morphism $ \bM^{ss}_n(C,L)\to \fM_n(C,L)$. The  question
is whether the Chern classes $c_j(\bE^{ss})$ lift to $H^*(C\times \fM_n(C,L))$. In \cite{ah}, we show that $c_2(\bE^{ss})$ does not
lift when $L$ is trivial, so the  answer is negative in general. Nevertheless, in this note we will show that the answer is positive over the moduli of
{\em stable} bundles $\fM^s_n(C,L)\subset \fM_n(C,L)$ for certain combinations of Chern classes such as
$$\left.\left(c_2(\bE^{ss})- \left(\frac{n-1}{2n}\right) c_1(\bE^{ss})^2\right)\right|_{\fM^s}$$
that we call reduced Chern classes.
The proof is not difficult, and it uses the fact that although the Poincar\'e bundle need not exist, its associated projective space
bundle does.  

The main theorem is formulated and proved without any mention of stacks.
The stack  viewpoint  in only discussed in the epilogue.
My thanks to Dick Hain for many discussions about this material. Also thanks are due  to Kapil Paranjape for an insightful comment
during my talk on this stuff at TIFR in March 2023. Finally, my thanks to the  referee for several helpful remarks.

\section{Some background on moduli spaces}

Fix a curve $C$ over an algebraically closed field $k$, an integer $n>1$ and a line bundle $L$ as in the introduction. Let $d=\deg L$.
Denote by $\SS$ the contravariant functor from $k$-schemes to sets that takes a scheme $Y$ to the set
$$
\SS(Y) :=
\left\{
\parbox{2.75in}{vector bundles $E$ of rank $n$ over $C\times Y$ whose restriction to $C\times\{y\}$ is semistable
for each $y\in Y$, with   $\det (E)$  Zariski locally isomorphic to $pr_1^*L$ }
\right\}/\cong
$$

This functor is coarsely represented by a projective variety $\fM_n(C,L)$. For the purposes of this paper,
it means that there is a natural transformation $\eta : \SS(-) \to \Hom(-,\fM_n(C,L))$ 
which is initial among all natural transformations  $\SS(-) \to \Hom(-,Z)$. 
Since $\Hom(-,\fM_n(C,L))$ is a sheaf on the big Zariski site,
we find that $\eta$ factors through the sheafification of $\SS(-)$. It follows that $\SS(Y) \to \Hom(Y,\fM_n(C,L))$
factors through the quotient $\SS(Y)/\!\sim$, where $E'\sim E$ if $E'\cong E\otimes pr_2^*M$, for some $M\in Pic(Y)$.
The resulting map $\SS(-)/\!\sim\, \to \Hom(-,\fM_n(C,L))$ is still not  an isomorphism of functors in general.
In fact 
$$\SS(\Spec k)= \SS(\Spec k)/\!\sim \, \to Hom(\Spec k,\fM_n(C,L))$$
is not bijection when $n$ and $d$ are not coprime. Given $E\in \SS(Y)$, let $\eta_E:Y\to \fM_n(C,L)$ denote the corresponding morphism.
 The functor $\St(-)$ is defined as above by changing  the word ``semistable" to ``stable". This functor is coarsely represented by
 an open subvariety $\fM_n^s(C,L)\subset \fM_n(C,L)$.

We  need to recall some details for the construction of the above moduli spaces.
Further details  can be found in  \cite{pic,seshadri}. Fix an ample line bundle $\cO_C(1)$ on $C$. We can choose  $m\gg 0$ so for each semistable rank $n$ vector bundle $E$ over $C$ with
$\det E\cong L$, $E(m)$ is globally generated and has vanishing first cohomology. Set
$$
h(t) = n(1-g) +d+ [n\deg \cO_C(1)] t
$$
$$  N= h(m)$$
Denote by $Q$ the Quot scheme $\Quot(\cO_C(-m)^N, h(t))$ that parameterizes quotient maps  $r:\cO_C(-m)^N\twoheadrightarrow E$ where $E$ is a rank $n$ coherent sheaf
 with Hilbert polynomial $h(t)$, $\det E\cong L$ and such that $r$ induces an isomorphism $H^0(\cO^N)\cong H^0(E(m))$.
  Then $Q$ parameterizes the kernels $\ker r$ of the framings and $\GL_N(k)$ acts on $Q$ by precomposition.
The centre acts trivially, so the action factors through $\PGL_N(k)$.
 Let
$$
Q \supset Q^\ss \supset Q^s
$$
be the open subvarieities parameterizing  semistable and stable vector bundles respectively. These are exactly the sets of semistable and stable points with respect to a 
$GL_N(k)$ linearized ample line bundle $\cL$ on $Q$.
The moduli space and its open subset parameterizing stable vector bundles are the GIT quotients:
$$
\fM_n(C,L) = Q^\ss//\PGL_N(k)$$
$$ \fM_n^s(C,L) =Q^s//\PGL_N(k)
$$
Let $p^{ss}:Q^\ss\to  \fM_n(C,L)$ and $p:Q^s\to  \fM_n^s$(C,L) denote the projections.
The projection $p:Q^s \to \fM_n^s(C,L)$ is a principal $\PGL_N$ bundle.

Let $pr_1:C\times Q\to C$ denote the projection.
Then  $C\times Q$ carries a universal quotient of $pr_1^*\cO_C(-m)^N$, which restricts
to  a $\GL_N(k)$-equivariant vector bundle $\cE$ on $C\times Q^{s}$. This bundle need not descend to  $C\times \fM^s_n(C,L)$.
However, the  associated projective space bundle will. Recall that
a Brauer--Severi scheme over $X$ is a projective space bundle which is locally trivial in the \'etale topology.
If $E$ is a vector bundle on $X$, then $\PP(E)$ is a Brauer--Severi scheme. However, the converse is not true
unless the Brauer group of $X$ is trivial \cite{grothendieck}.

\begin{proposition}[\cite{bbgn}]
There exists a  Brauer--Severi scheme  $\fP$ over  $C\times \fM_n^s(C,L)$,
such that $\PP(\cE)=p^{*}\fP$.
\end{proposition}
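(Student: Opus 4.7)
The plan is to descend $\PP(\cE)$ along the principal $\PGL_N$-bundle $\id_C \times p : C \times Q^s \to C \times \fM_n^s(C,L)$. The reason $\cE$ itself fails to descend is precisely that the center $\Gm \subset \GL_N$ acts on $\cE$ by fiberwise scalar multiplication; passing to the associated projective bundle kills exactly this obstruction.

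First, I would upgrade the $\GL_N$-equivariant structure on $\cE$ to a $\PGL_N$-equivariant structure on $\PP(\cE)$ as a scheme over $C \times Q^s$. Because scalar matrices act trivially on projective space, the induced $\GL_N$-action on $\PP(\cE)$ factors through $\PGL_N$, and it lies over the $\PGL_N$-action on $C \times Q^s$ (trivial on the $C$ factor, the given action on $Q^s$). Next, I would invoke faithfully flat descent. Since $p$ is a principal $\PGL_N$-bundle, it is étale-locally trivial: choose an étale surjection $U \to \fM_n^s(C,L)$ such that $U \times_{\fM_n^s(C,L)} Q^s \cong U \times \PGL_N$. The $\PGL_N$-equivariance of $\PP(\cE)$ then furnishes descent data for the pullback of $\PP(\cE)$ to $C \times U \times \PGL_N$ along the two projections to $C \times U$. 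Because $\PP(\cE) \to C \times Q^s$ is projective, hence in particular proper, this descent datum is effective, producing a scheme $\fP$ over $C \times \fM_n^s(C,L)$ together with an identification $(\id_C \times p)^* \fP \cong \PP(\cE)$.

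Finally, to conclude that $\fP$ is a Brauer--Severi scheme, I would note that after pulling back along the étale cover $C \times U \to C \times \fM_n^s(C,L)$, the scheme $\fP$ becomes $\PP(\cE)|_{C \times U}$, which is a Zariski-locally trivial $\PP^{n-1}$-bundle. Hence $\fP$ is étale-locally isomorphic to projective space, which is the defining property.

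The main potential obstacle is the effectivity of the descent datum: abstract descent for quasi-coherent sheaves always works, but here we need to descend a non-affine morphism. This is handled by the fact that projective (or more generally, proper with relatively ample structure) morphisms satisfy fpqc descent; equivalently, one may phrase the result as the assertion that $\PP(\cE)$ defines a class in $H^2_{\text{ét}}(C \times Q^s, \Gm)$ that is $\PGL_N$-invariant and thus pulled back from $H^2_{\text{ét}}(C \times \fM_n^s(C,L), \Gm)$, whose representing Brauer--Severi scheme is $\fP$.
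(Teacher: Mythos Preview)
Your proposal is correct and follows essentially the same route as the paper: pass from the $\GL_N$-equivariant bundle $\cE$ to the $\PGL_N$-equivariant $\PP(\cE)$, then apply faithfully flat descent along the principal $\PGL_N$-bundle $Q^s\to\fM_n^s(C,L)$, with effectivity supplied by projectivity. The only cosmetic differences are that the paper phrases the descent datum directly via the isomorphism $\PGL_N\times Q^s\cong Q^s\times_{\fM_n^s}Q^s$ rather than through an \'etale trivialization, and verifies the Brauer--Severi property by checking that the closed fibres are $\PP^{n-1}$ and invoking Grothendieck's criterion, whereas you check \'etale-local triviality by hand; neither difference is substantive.
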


\begin{proof} Since the proof is omitted in \cite{bbgn}, we indicate it here.
Let $\fP'= \PP(\cE)$ and $G= PGL_N(k)$.
The action of $GL_N(k)$ on $\cE$  induces a free action of $G$ on $\fP'$. 
This action, together with the isomorphism $G\times Q^s\cong Q^s\times_{\fM_n^s} Q^s$,
induces descent data on $\fP'$. By faithfully flat descent \cite[pp 135-136, theorem 6]{blr}, there exists a scheme
$\fP$ over  $C\times \fM_n^s(C,L)$
 whose pull back is $\fP'$ (more concretely $\fP= \fP'/\PGL_N(k)$).
It follows that the (closed) fibres of $\fP\to C\times \fM_n^s$ are isomorphic to $\PP^{n-1}$. Therefore it is a Brauer--Severi scheme by 
\cite[Cor 8.3]{grothendieck}.

\end{proof}

\section{Reduced Chern classes}\label{section:reducedChern}

Let us modify the question given in the introduction as follows:
{\em For which polynomials $A(c_1, \ldots, c_n)\in \Q[c_1,\ldots c_n]$ 
can we find universal classes $\alpha\in H^*(C\times \fM_n(C,L),\Q)$
such that for $E\in \SS(Y)$, 
$$A(c_1(E), \ldots, c_n(E)) = \eta_E^*\alpha?$$}
We do not know the answer. However, we will answer the analogous  question on $\fM_n^s(C)$.

Let us first treat the case where $k=\C$. Schemes over $\C$ of finite type  will be given the analytic topology.
One thing to observe is that for any line bundle $M$ on $Y$,
$\eta_{E}= \eta_{E\otimes M}$. This puts strong restrictions of the sorts of
polynomials we can allow, and  it suggests the following definition.
 Given a  rank $n$ complex vector bundle $E$ on a topological space $X$,
define the $r$th reduced Chern class of level $n$ informally by
$$\cbar_r(E) = c_r(E\otimes \det(E)^{-1/n})\in H^{2r}(X,\Q)$$
More precisely,
$$\cbar_r(E) := \sigma_r(e_1- \frac{e_1+\dots+ e_n}{n},\ldots, e_n- \frac{e_1+\dots+ e_n}{n})$$
where  $e_1,\dots, e_n$ are the Chern roots, and  $\sigma_r$ is the $r$th elementary symmetric polynomial.
Since $n$ will be fixed, we suppress it from the notation.
By \cite[ex 3.2.2]{fulton}, we obtain a formula
\begin{equation}\label{eq:cbar}
 \cbar_r(E)= \sum_{i=0}^r (-1)^{r-i}\frac{1}{n^{r-i}}\binom{n-i}{r-i} c_1(E)^{r-i}c_{i}(E)
\end{equation}
In particular, the first  two values are
$$\cbar_1(E)=0$$
$$\cbar_2(E)= c_2(E)- \left(\frac{n-1}{2n}\right) c_1(E)^2$$ 

These classes have a simple characterization.

\begin{lemma}
The $j$th  reduced Chern class satisfies
\begin{equation}\label{eq:cbar1}
 \cbar_j(E) = c_j(E)  \quad \text{ when } c_1(E)=0
\end{equation}
and
\begin{equation}\label{eq:cbar2}
 \cbar_j(E\otimes L)= \cbar_j(E)
\end{equation}
for any rank $n$ vector bundle $E$ and any line bundle $L$.
Conversely, $\cbar_j$ is the only polynomial in $c_1,\ldots, c_n$ satisfying these conditions for all CW complexes $X$.
\end{lemma}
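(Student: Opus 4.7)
The plan is to verify \eqref{eq:cbar1} and \eqref{eq:cbar2} directly from formula \eqref{eq:cbar}, then to establish uniqueness by testing the hypotheses on universal bundles over classifying spaces.

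Condition \eqref{eq:cbar1} follows since, when $c_1(E)=0$, only the $i=r$ summand in \eqref{eq:cbar} survives, giving $\bar{c}_r(E)=c_r(E)$. Condition \eqref{eq:cbar2} is cleanest via Chern roots: tensoring with $L$ translates each Chern root $e_i$ to $e_i+\ell$ with $\ell=c_1(L)$, and the shifted roots $e_i-\tfrac{1}{n}\sum_k e_k$ are invariant under a common translation, so their elementary symmetric polynomials $\bar{c}_j$ are unchanged.

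For the converse, formula \eqref{eq:cbar} shows that $\bar{c}_r = c_r + R_r(c_1,\ldots,c_{r-1})$ for some polynomial $R_r$. This triangular relation yields an algebra isomorphism $\mathbb{Q}[c_1,\ldots,c_n] \cong \mathbb{Q}[c_1,\bar{c}_2,\ldots,\bar{c}_n]$, so any $P$ meeting the hypotheses can be rewritten uniquely in the new generators. To exploit \eqref{eq:cbar2}, I would work on $BU(n)\times BU(1)$ with the rank $n$ bundle obtained by tensoring the pullback of the universal rank $n$ bundle with the pullback of the universal line bundle. Since $H^*(BU(n)\times BU(1);\mathbb{Q})$ is a genuine polynomial ring in $c_1,\ldots,c_n,\ell$ (with $\ell$ the first Chern class of the line bundle factor), and since the tensor operation sends $c_1 \mapsto c_1+n\ell$ while leaving $\bar{c}_i$ unchanged, \eqref{eq:cbar2} becomes the polynomial identity $P(c_1+n\ell,\bar{c}_2,\ldots,\bar{c}_n)=P(c_1,\bar{c}_2,\ldots,\bar{c}_n)$, forcing $P$ to be free of $c_1$.

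Finally, I would test on $BSU(n)$, whose rational cohomology is $\mathbb{Q}[c_2,\ldots,c_n]$ and whose universal bundle has $c_1=0$ (hence $\bar{c}_i=c_i$). Condition \eqref{eq:cbar1} then reads $P(c_2,\ldots,c_n)=c_j$ as an identity of polynomials, pinning down $P=\bar{c}_j$. The one subtle point is that polynomial identities must be read off from identities of cohomology classes; this is supplied by the algebraic independence of the universal Chern classes on $BU(n)$, $BU(1)$, and $BSU(n)$, all of which are CW complexes, so no further input is needed.
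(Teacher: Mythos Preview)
Your proof is correct and takes a somewhat different route from the paper's for the uniqueness part. For the forward direction both arguments are essentially the same (immediate from the Chern-root definition). For the converse, the paper works entirely on $B\GL_n(\C)$: it writes the candidate $q$ as a symmetric polynomial in the Chern roots, applies \eqref{eq:cbar2} with $L=(\det E)^t$ for all $t\in\Z$ to obtain a polynomial in $t$ that must vanish identically, deduces $q(e_1,\ldots,e_n)=q\bigl(e_i-\tfrac1n\sum_k e_k\bigr)$, and then uses \eqref{eq:cbar1} to write $q=\sigma_j+r\sigma_1$, so that evaluating at the shifted roots kills the $\sigma_1$ term and leaves $\cbar_j$. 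You instead first perform the triangular change of generators to $c_1,\cbar_2,\ldots,\cbar_n$ and then split the two hypotheses across two classifying spaces: $BU(n)\times BU(1)$, where an independent line-bundle twist turns \eqref{eq:cbar2} into the polynomial identity $P(c_1+n\ell,\cbar_2,\ldots)=P(c_1,\cbar_2,\ldots)$ and eliminates $c_1$; and $BSU(n)$, where \eqref{eq:cbar1} pins down $P=\cbar_j$. Your version is arguably cleaner, since the change of coordinates makes the role of each hypothesis transparent and avoids the ``vanishing for all integer $t$'' interpolation; the paper's version has the minor advantage of staying on a single classifying space and not needing the auxiliary $BU(1)$ factor.
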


\begin{proof}
 We just prove the last statement, since the first  follows immediately from the definition of $\cbar_j$.
 Suppose that $q(c_1,\ldots, c_n) $ is a polynomial satisfying   \eqref{eq:cbar1} and  \eqref{eq:cbar2}  for  all $X$.
 It suffices to treat the universal case where $X=BGL_n(\C)$ is the classifying space and $E$ is the universal bundle over it.
 Then $q$ can be identified with the cohomology class it evaluates to.
 We write $q$ as an symmetric polynomial in the roots $e_1,\ldots, e_n$.
 Condition \eqref{eq:cbar2}, with $L= (\det E)^t$,  implies
 $$q(e_1 + t(e_1+\ldots +e_n),\ldots, e_n + t(e_1+\ldots +e_n)) -q(e_1,\ldots, q_n)=0$$
 for every $t\in \Z$.  Treating this as a polynomial in $t$ with coefficients in the field of symmetric rational
 functions in $e_1,\ldots, e_n$, it follows that it must be identically zero.
 Therefore
  $$q(e_1,\ldots, e_n)= q(e_1 -\frac{1}{n}(e_1+\ldots +e_n),\ldots, e_n -\frac{1}{n}(e_1+\ldots +e_n)) $$
 Condition \eqref{eq:cbar1} means that 
  $$q = c_j+ rc_1 =\sigma_j + r\sigma_1$$
  for some symmetric  polynomial  $r$. Therefore
  $$q(e_1,\ldots, e_n)=q(e_1 -\frac{1}{n}(e_1+\ldots +e_n),\ldots)= \sigma_j(e_1 -\frac{1}{n}(e_1+\ldots +e_n),\ldots)$$
  as claimed, because $\sigma_1$ will vanish for the above argument.
\end{proof}
The lemma implies that $\cbar_j(E)$ is an invariant of the projective space bundle $\PP(E)$. This can be viewed as a $PGL_n(\C)$-bundle.
We will extend these classes to an arbitrary  $PGL_n(\C)$-bundle  (which need not arise from a vector bundle \cite{grothendieck}).

\begin{lemma}
 The cohomology ring 
 $$H^*(B\PGL_n(\C), \Q) = \Q[\cbar_2,\ldots, \cbar_n] $$
 where $\cbar_j$ is a class of degree $2j$.
 Under the natural map $B\GL_n(\C)\to B\PGL_n(\C)$, the classes $\cbar_i$ pull back to the reduced Chern classes  $\cbar_i(E) $ of the universal bundle.
\end{lemma}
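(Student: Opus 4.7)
My plan is to compute the ring structure via a rational equivalence between $B\PGL_n(\C)$ and $B\SL_n(\C)$, and then identify the generators as reduced Chern classes by applying the characterization from the previous lemma to the pullback on $B\GL_n(\C)$.

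For the ring structure, the short exact sequence
$$1 \to \mu_n \to \SL_n(\C) \to \PGL_n(\C) \to 1$$
induces a fibration $B\mu_n \to B\SL_n(\C) \to B\PGL_n(\C)$. Since $\mu_n$ is finite, $H^*(B\mu_n,\Q)$ is concentrated in degree zero, and since $\PGL_n(\C)$ is connected the base is simply connected. The Serre spectral sequence therefore collapses and shows that $B\SL_n(\C) \to B\PGL_n(\C)$ induces a rational cohomology isomorphism. Combining this with the classical identification $H^*(B\SL_n(\C),\Q) = \Q[c_2, \ldots, c_n]$, I would define $\cbar_j \in H^{2j}(B\PGL_n(\C),\Q)$ to be the unique class pulling back to $c_j$ on $B\SL_n(\C)$; these are then polynomial generators of $H^*(B\PGL_n(\C),\Q)$.

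To identify the pullback $\pi^*\cbar_j$ along the natural map $\pi : B\GL_n(\C) \to B\PGL_n(\C)$ with the reduced Chern class of the universal bundle $E$, I would verify the two conditions supplied by the previous lemma for the polynomial $\pi^*\cbar_j \in \Q[c_1,\ldots,c_n]$. For condition \eqref{eq:cbar2}, I would observe that $E$ and $E\otimes L$ have canonically isomorphic projectivizations, so on any CW complex their classifying maps to $B\GL_n(\C)$ become homotopic after composing with $\pi$; naturality then gives $\pi^*\cbar_j(E\otimes L) = \pi^*\cbar_j(E)$. For condition \eqref{eq:cbar1}, I would pull back further along the inclusion $\iota : B\SL_n(\C) \hookrightarrow B\GL_n(\C)$: the composition $\pi \circ \iota$ is precisely the rational equivalence above, so $\iota^*\pi^*\cbar_j = c_j$, and since $\iota^*$ sends $c_1$ to zero and fixes $c_i$ for $i \geq 2$, this means exactly that the polynomial $\pi^*\cbar_j$ specializes to $c_j$ when $c_1 = 0$. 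The previous lemma then forces $\pi^*\cbar_j = \cbar_j(E)$.

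The main step requiring actual input is the rational collapse of the Serre spectral sequence; the remainder is formal manipulation with naturality of Chern classes combined with the uniqueness clause already proved. I do not anticipate a serious obstacle, only some care in spelling out that $\pi\circ\iota$ is literally the fibration map whose rational equivalence was established in the first step.
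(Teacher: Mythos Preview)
Your proposal is correct and follows essentially the same route as the paper: both use the fibration $B\SL_n(\C)\to B\PGL_n(\C)$ with fibre $B\mu_n$ to identify the rational cohomology rings, and both invoke the characterization in the preceding lemma to pin down the pullback to $B\GL_n(\C)$. You are in fact more explicit than the paper in checking the two hypotheses of that lemma (twist-invariance via $\PP(E)\cong\PP(E\otimes L)$, and the $c_1=0$ specialization via the factorization $\pi\circ\iota$), whereas the paper simply asserts that $\pi^*$ is a section of the projection $H^*(B\GL_n)\to H^*(B\SL_n)$ and appeals to the lemma without further comment.
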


\begin{proof}
 First note that
under the map $H^*(BGL_n(\C),\Q)\to H^*(BSL_n(\C),\Q)$, we can identify the cohomology ring
$$H^*(BSL_n(\C), \Q) =\Q[c_1,\ldots, c_n]/(c_1) \cong  \Q[\cbar_2,\ldots, \cbar_n]$$
The exact sequence
$$1\to \Z/n\Z\to SL_n(\C)\to PGL_n(\C)\to 1$$
induces a fibration
$$ BSL_n(\C)\to BPGL_n(\C)$$
with fibre $B\Z/n\Z$.
Therefore, we have an isomorphism
$$H^*(BPGL_n(\C), \Q) \cong H^*(BSL_n(\C),\Q)$$
  The homomorphism $ GL_n(\C)\to PGL_n(\C)$ induces
an algebra homomorphism
 $$\Q[\cbar_2,\ldots, \cbar_n]\to \Q[c_1,\ldots, c_n]$$
 which is a section for the projection given above.
The classes $\cbar_i$ map to the reduced Chern classes  $\cbar_i(E) $ of the universal bundle by the previous lemma.

\end{proof}

Given a $PGL_n(\C)$-bundle, or equivalently a $\PP^n$-bundle $P$ on $X$,
we obtain a classifying map $f:X\to BPGL_n(\C)$.
We define 
$$\cbar_i(P) = f^*\cbar_i$$
We have that
$$\cbar_i(\PP(E))= \cbar_i(E)$$
by the previous  lemma.

\section{Reduced Chern classes for Brauer-Severi schemes}\label{section:BS}

Now suppose that $k$ is an arbitrary algebraically closed field.
Choose  $\ell$ coprime to $\operatorname{char} k$.
Given a vector  bundle $E$ on a $k$-variety $X$, 
the Chern classes $c_i(E)\in H_{et}^{2i}(X, \Q_\ell)$ with values in $\ell$-adic cohomology  can be defined by Grothendieck's procedure
 (see \cite{groth-chern, milne} or the next section).
We turn now to the definition of reduced Chern classes for Brauer-Severi schemes.
For scheme of the form $\PP(E)$, can define $\cbar_i(E)$ using \eqref{eq:cbar}, and check that this depends
only on $\PP(E)$. This method does not generalize to Brauer-Severi schemes.
There are a couple of ways to proceed. One is replace the classifying space $BPGL_n$ by a simplicial scheme or stack.
This  approach is briefly discussed in the last section.
In this section, we will follow a more down to earth path, and redo the construction for $\PP(E)$ in a way that does generalize.
Rather than working with $\cO_{\PP(E)}(1)$, which does not exist on a general Brauer-Severi scheme, we work with the  relative dualizing sheaf  
which does. A standard calculation shows that
$$\pi_*\omega_{\PP(E)/X}^{-1} \cong S^n(E)\otimes \det(E)^{-1}$$

We start with a purely algebraic lemma.

\begin{lemma}
 In the polynomial ring $R=\Q[x_1,\ldots, x_n]$, let 
 $$s_i = \sigma_i(y_1,\ldots, y_N),$$
 where 
 $$\{y_1,\ldots, y_N\} = \{m_1 x_1+\ldots +m_n x_n\mid m_i\in \N, m_1+\ldots + m_n= n\}$$
 Then the set  $\{s_1,\ldots, s_n\}$ generates the ring  of invariant polynomials $R^{S_n}$.
 
 \end{lemma}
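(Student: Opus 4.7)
The plan is to reduce the lemma to algebraic independence of $s_1,\ldots,s_n$. Since the $S_n$-action on $(x_1,\ldots,x_n)$ permutes the multiset $\{y_1,\ldots,y_N\}$, each $s_i$ lies in $R^{S_n}$ and is homogeneous of $x$-degree $i$. Because $R^{S_n}=\Q[\sigma_1,\ldots,\sigma_n]$ is a graded polynomial ring with $\deg\sigma_j=j$, one can write $s_i=F_i(\sigma_1,\ldots,\sigma_i)$ with $F_i$ of weighted degree $i$; then $\sigma_i$ can only appear linearly, so
\[
F_i \;=\; c_i\,\sigma_i + G_i(\sigma_1,\ldots,\sigma_{i-1}),\qquad c_i\in\Q.
\]
If $c_i\neq 0$ for every $i\leq n$, a straightforward induction on $i$ shows $\sigma_i\in\Q[s_1,\ldots,s_i]$, which is the lemma.

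To show $c_i\neq 0$ I would evaluate at the point $x^*\in\C^n$ with $x_l^*=\zeta^{l-1}$ for $l\leq i$ and $x_l^*=0$ for $l>i$, where $\zeta=e^{2\pi\sqrt{-1}/i}$. The factorization $\prod_l(T-x_l^*)=T^{n-i}(T^i-1)$ gives $\sigma_k(x^*)=0$ for $k\neq 0,i$ and $\sigma_i(x^*)=(-1)^{i+1}$, so $s_i(x^*)=c_i\,\sigma_i(x^*)$ and the task reduces to $s_i(x^*)\neq 0$. At $x^*$ each $y_j$ becomes $u_j:=\sum_{l=1}^i m_l^{(j)}\zeta^{l-1}$. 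Multiplication of $x^*$ by $\zeta$ cyclically permutes its first $i$ coordinates and hence permutes $\{u_j\}$, yet sends each $u_j$ to $\zeta u_j$; thus the multiset $\{u_j\}$ is invariant under $u\mapsto\zeta u$, so $\sigma_k(u)=\zeta^k\sigma_k(u)$, forcing $\sigma_k(u)=0$ for $k=1,\ldots,i-1$. Newton's identities then give $\sigma_i(u)=(-1)^{i-1}p_i(u)/i$, and it is enough to show the power sum $p_i(u)=\sum_j u_j^i$ is nonzero.

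For this last step I would use a generating-function computation. Writing $p_i(u)=\partial_\lambda^i[t^n]\prod_{l=1}^n(1-te^{\lambda c_l})^{-1}\big|_{\lambda=0}$ with $c_l=\zeta^{l-1}$ for $l\leq i$ and $c_l=0$ otherwise, the character sum $\sum_{l=1}^i\zeta^{j(l-1)}$ equals $i$ if $i\mid j$ and vanishes otherwise; this collapses $\log\prod_{l=1}^i(1-te^{\lambda\zeta^{l-1}})$ to $i\log(1-t)-\frac{\lambda^i}{(i-1)!}\sum_{k\geq 1}k^{i-1}t^k+O(\lambda^{2i})$, from which one reads off
\[
p_i(u)\;=\;i\sum_{k=1}^n k^{i-1}\binom{2n-k-1}{n-1},
\]
a manifestly positive integer. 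Hence $c_i>0$ for all $i$.

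The main obstacle is precisely this non-vanishing of $p_i(u)$. The $u_j$ are complicated elements of $\Z[\zeta]$, and the bare cyclic symmetry is not enough---there exist even full-period compositions with $u_m=0$ (e.g.\ $(2,0,1,1,1,0)$ for $i=6$)---so one really needs the generating-function identity to see that the total $i$-th power sum is positive.
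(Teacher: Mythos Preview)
Your argument is correct, and it reaches the same destination as the paper's proof by a genuinely different route. Both proofs first observe that $s_i\in R^{S_n}$ is homogeneous of degree $i$, write it as $s_i=c_i\,\sigma_i+G_i(\sigma_1,\dots,\sigma_{i-1})$, and reduce everything to showing $c_i\neq 0$; after that, the triangular inversion is identical.

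Where the two diverge is in how $c_i>0$ is established. The paper factors the generating polynomial
\[
\sum_{r} s_r t^r \;=\; \prod_{j=1}^n (1+n x_j t)\cdot \prod_{j>n}(1+y_j t),
\]
singling out the ``diagonal'' contributions $y_j=nx_j$; the second factor is a positive $S_n$-invariant polynomial in the $x$'s, and a short combinatorial argument with the monomial symmetric functions $m_\lambda$ and the triangular change of basis $e_\lambda = m_{\lambda'}+\sum_{\mu<\lambda'} a_{\lambda'\mu} m_\mu$ then forces the coefficient of $e_r=m_{(1^r)}$ in $s_r$ to be strictly positive. No explicit value of $c_r$ comes out of this---only positivity.

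Your approach instead specializes to $x^*=(1,\zeta,\dots,\zeta^{i-1},0,\dots,0)$ to annihilate $\sigma_1,\dots,\sigma_{i-1}$, uses the $\mu_i$-symmetry of the multiset $\{u_j\}$ together with Newton's identities to reduce $s_i(x^*)$ to a power sum, and then evaluates that power sum by a clean exponential generating function. This is more computational but yields the explicit closed form
\[
c_i \;=\; \sum_{k=1}^{n} k^{\,i-1}\binom{2n-k-1}{\,n-1\,}\;>\;0,
\]
which the paper's soft positivity argument does not see. Conversely, the paper's method avoids the analytic/generating-function machinery entirely and stays within elementary symmetric-function combinatorics. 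Both are valid; yours gives sharper quantitative information, the paper's is shorter once the $m_\lambda$ formalism is in hand.
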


\begin{proof}
We start by  some notation. We assume that $y_i$ bijectively parameterize the expressions $m_1 x_1+\ldots +m_n x_n$,
such that $y_i=nx_i$ for $i\le n$. In particular,  $N= \binom{2n-1}{n}$.
A partition of length $n$ is an $n$-tuple of  nonincreasing integers in $\N^n$.
We reserve the letters $\lambda= (\lambda_1,\lambda_2,\ldots)$ and $\mu=(\mu_1,\mu_2,\ldots)$ for partitions.
When indicating a partition, we usually just list the nonzero integers in it.
We write $(1^r) = (1,\ldots, 1)$ with $r$ $1$'s.
We order the set of partitions using reverse lexicographic order, so for example
$$(2, 1) > (2) > (1^3)$$
Let $P_d$ denote the  set of partitions $\lambda$ of length  $n$ and  weight $|\lambda| = \lambda_1+\ldots + \lambda_n= d$.


The vector space of degree $d$ homogenous invariants $R_d^{S_n}$ has two bases that we need to recall \cite{macdonald}. The first  basis is familiar from basic algebra. It is the
 set of monomials
$$e_\lambda = e_{\lambda_1}e_{\lambda_2}\ldots$$
where
$$e_r = \sigma_r(x_1,\ldots, x_n)$$
and
$\lambda\in P_d$.
The second basis is  the set of monomial symmetric functions
$$m_\lambda = \sum_\sigma x_1^{\sigma(\lambda_1)}\ldots x_n^{\sigma(\lambda_n)} $$
where the sum runs over distinct permutations of $(\lambda_1,\ldots, \lambda_n)$. The index $\lambda$ again ranges over $P_d$.
One has that
$$e_r = e_{(r)}= m_{(1^r)}$$
In general, there is  a change of basis formula \cite[chap I (1.10), (2.3)]{macdonald}
$$ e_{\lambda} = m_{\lambda'} + \sum_{\mu<\lambda'} a_{\lambda'\mu}m_\mu$$
where $\lambda'$ is the conjugate partition. 
 
Let us say that a polynomial in $R$ is positive if it has no negative coefficients.
We claim that a positive polynomial in $R^{S_n}$ is a linear combination of $m_\lambda$'s with
nonnegative coefficients. Clearly it is enough to prove this for a nonzero invariant polynomial $p$
with integer coefficients. If $x_1^{\alpha_1}\ldots x_n^{\alpha_n}$ appears in $p$ with 
coefficient $c>0$, then all monomials with exponent a permutation of $\alpha=(\alpha_1,\ldots, \alpha_n)$
occur in $p$ with the same coefficient. In particular, we can assume that $\alpha$ is a partition,
and that
$$p = cm_\alpha + q$$
where $q$ is a positive invariant polynomial with integer coefficients. Therefore the claim follows by induction
on the sum of coefficients.

Let us say that a  polynomial $\sum_i p_i t^i\in R[t]$ is invariant or positive if all the coefficients $p_i$ have this property.
 We can write
 $$\sum_{i=0}^N s_i t^i = \prod_{i=1}^N( 1+ y_it) =  \prod_{j=1}^n (1+ nx_jt)\underbrace{\prod_{i=n+1}^N ( 1+ y_it)}_F $$ 
 Clearly $F$ is a positive invariant polynomial with a constant term of $1$.
 It  follows  from this and the previous formulas that 
 $$s_r = c_r e_r + \sum_{\lambda\in P_r-\{(r)\}} d_{r,\lambda } e_\lambda, \quad r\le n$$
where $c_r>0$. The system of equations is ``triangular" in the sense that
$$s_1 = c_1 e_1$$
$$s_2 = c_2 e_2 + d_{2,(1,1)} e_1^2$$
$$ s_3 = c_3 e_3 + d_{3, (2,1)} e_2e_1 + d_{3, (1,1,1)} e_1^3 $$
$$\ldots$$
So we can solve
$$e_1 = \frac{1}{c_1} s_1$$
$$e_2 = \frac{1}{c_2} s_2 - \frac{d_{2,(1,1)}}{c_1^2} s_1^2$$
$$\ldots$$
$$e_i = \psi_i(s_1,\ldots, s_n)$$
for some rational polynomial $\psi_i$.
and the lemma is proved.
\end{proof}


\begin{lemma}\label{lemma:SnE}
 The Chern classes of $F=S^n(E)\otimes \det(E)^{-1}$ are given by  rational polynomials in  $\cbar_i(E)$.
 Conversely, there exists universal rational polynomials $\phi_i$ such that
 $$\cbar_i(E) = \phi_i(c_2(F),\ldots, c_n(F))$$
\end{lemma}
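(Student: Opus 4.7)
The plan is to reduce both assertions to the polynomial identities furnished by the preceding lemma, after explicitly computing the Chern roots of $F$ in terms of the reduced Chern roots of $E$.

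By the splitting principle, write the Chern roots of $E$ formally as $e_1, \ldots, e_n$, set $e = \sum_i e_i = c_1(E)$, and introduce the ``reduced Chern roots'' $z_i = e_i - e/n$, which by the definition of $\cbar_j$ satisfy $\sum_i z_i = 0$ and $\cbar_j(E) = \sigma_j(z_1, \ldots, z_n)$. The $N = \binom{2n-1}{n}$ Chern roots of $S^n(E)$ are the expressions $y_{\bmu} = m_1 e_1 + \ldots + m_n e_n$ indexed by $\bmu \in \N^n$ with $m_1 + \ldots + m_n = n$, and tensoring with $\det(E)^{-1}$ subtracts $e$ from each root. Using $m_1+\ldots+m_n = n$ to rewrite the shift, the Chern roots of $F$ become
\[
\tilde y_{\bmu} \;=\; y_{\bmu} - e \;=\; m_1 z_1 + \ldots + m_n z_n.
\]
This places us in exactly the setting of the preceding lemma with $x_i$ replaced by $z_i$ and with $s_i$ becoming $c_i(F)$.

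The first claim is then immediate: $c_r(F) = \sigma_r(\tilde y_{\bmu})$ is a symmetric polynomial in $z_1, \ldots, z_n$, hence a rational polynomial in the elementary symmetric functions $\cbar_j(E) = \sigma_j(z_1, \ldots, z_n)$. For the converse, I would apply the preceding lemma to obtain rational polynomials $\psi_i$ with $\sigma_i(x_1, \ldots, x_n) = \psi_i(s_1, \ldots, s_n)$ as a polynomial identity in $\Q[x_1, \ldots, x_n]$, then specialize $x_i \mapsto z_i$ to obtain $\cbar_i(E) = \psi_i(c_1(F), \ldots, c_n(F))$. Finally, $c_1(F) = \sum_{\bmu} \tilde y_{\bmu}$ is a symmetric linear function in the $z_i$, hence a scalar multiple of $\sum_i z_i = 0$; so the formula collapses to $\cbar_i(E) = \phi_i(c_2(F), \ldots, c_n(F))$ for some $\phi_i \in \Q[c_2,\ldots,c_n]$, as desired.

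The genuinely nontrivial content is already contained in the preceding algebraic lemma. What remains here is essentially bookkeeping: the correct identification of the Chern roots of $F$ with the expressions $m_1 z_1 + \ldots + m_n z_n$ (which is what forces the $\det(E)^{-1}$ twist), and the observation that this twist precisely annihilates $c_1$ on both sides and so lets one drop $c_1(F)$ from the formula.
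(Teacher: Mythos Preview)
Your proof is correct and follows essentially the same route as the paper: identify the Chern roots of $F$ as $m_1 z_1+\cdots+m_n z_n$ in terms of the reduced roots $z_i=e_i-e/n$, then invoke the preceding algebraic lemma and its polynomials $\psi_i$. Your explicit observation that $c_1(F)=0$ (as a symmetric linear form in the $z_i$) is exactly how the paper justifies writing $\cbar_i(E)=\psi_i(0,c_2(F),\ldots,c_n(F))$.
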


\begin{proof}
Let $e_1, \ldots, e_n$ denote the Chern roots of $E$. Then
 $$f_i = e_i - \frac{e_1+\dots+ e_n}{n}$$
 are the roots of the symbolic vector bundle $E\otimes \det(E)^{-1/n}$. The set of Chern roots of 
 $F=S^n(E)\otimes \det(E)^{-1}$ is  $\{m_1f_1+\ldots +m_n f_n\mid \sum m_i=n\}$.
 This implies the first statement.
 Furthermore, in the notation of the previous proof,
 $$ \cbar_i(E) = \psi_i(0, c_2(F),\ldots, c_n(F))$$

\end{proof}

Let $\pi:P\to X$ be a Brauer-Severi  scheme with $\PP^{n-1}$ as its geometric fibres.
Then $F= \pi_*\omega_{P/X}^{-1}$ is a locally free sheaf with respect to the \'etale topology.
Therefore it is locally free  with respect to the Zariski topology (by faithfully flat descent \cite[p 134]{blr}).
Now we define the reduced Chern classes of $P$ by
\begin{equation}\label{eq:brauerC_i}
 \cbar_i(P) = \phi_i(c_2(\pi_*\omega_{P/X}^{-1}),\ldots, c_n(\pi_*\omega_{P/X}^{-1}))
\end{equation}
The previous lemma implies that
$$\cbar_i(\PP(E))= \cbar_i(E)$$

\section{Chern classes of bundles on simplicial schemes}

There is one more ingredient that goes into  the proof main theorem that we need to discuss.
 We start by summarizing a few facts about simplicial  schemes \cite{deligne-hodge3, friedlander}.
  A simplicial scheme is given by a sequence of
 schemes $X_0, X_1,\ldots$ and face maps $\delta_i:X_n\to X_{n-1}$ and degeneracy maps $s_i:X_{n-1}\to X_n$
 satisfying some standard identities.
  If $X_\dt$ is simplicial scheme, an \'etale  sheaf $\cF_\dt$  on it is  given by  a collection of sheaves $\cF_n$ on $X_n$
together with structure maps $\alpha^* \cF_n\to \cF_m$ for the various face and degeneracy maps $\alpha:X_m\to X_n$.
These structure maps are subject to the appropriate compatibility conditions.
Similarly one can define a sheaf on the corresponding simplicial analytic space $X_\dt^{an}$  when $X_\dt$ is a 
simplicial object in the category of $\C$-schemes of finite type.
For either topology, one has the  constant sheaf $\Z_{X_\dt}$.
The main example of a  simplicial scheme that we will  need is the coskeleton of a morphism $U\to X$ given by
$$
\cosk( U\to X)_\dt=\xymatrix{
\ldots & U\times_X U\ar@<-1ex>[r]\ar[r]\ar@<1ex>[r] & U\ar[l] },
$$
where the maps are projections and diagonal maps.
This comes with an augmentation $\cosk(U\to X)_\dt\to X$. A sheaf on  $\cosk(U\to X)_\dt\to X$ can be obtained by pulling back a sheaf from $X$.
If $\cF_{\dt}$ is a sheaf on  a simplicial scheme $X_\dt$  in  either topology, cohomology is defined by
$$H^i(X_\dt, \cF_\dt) = Ext^i(\Z_{X_\dt}, \cF_\dt)$$ 

\begin{lemma}\label{lemma:descent}
 If $U\to X$ is the disjoint union of the sets of a Zariski open cover $\mathcal{U}$, then the cohomology, in either topology, of $\cosk(U\to X)_\dt$
 with coefficients in the pull back of a sheaf $\cF$ from $X$  is isomorphic to $H^i(X,\cF)$.
\end{lemma}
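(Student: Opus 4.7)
The plan is to apply the standard spectral sequence for cohomology of a sheaf on a simplicial scheme and identify it with the Čech-to-derived-functor spectral sequence for the cover $\mathcal{U}$. Writing $U=\sqcup_\alpha U_\alpha$ for the opens in $\mathcal{U}$, the first step is to unpack
$$\cosk(U\to X)_p = \underbrace{U\times_X\cdots\times_X U}_{p+1\text{ factors}} = \bigsqcup_{\alpha_0,\ldots,\alpha_p} U_{\alpha_0}\cap\cdots\cap U_{\alpha_p},$$
so that $\cosk(U\to X)_\dt$ is literally the Čech nerve of $\mathcal{U}$, with face maps given by the obvious inclusions.

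Next, I would invoke the standard $E_1$ spectral sequence for the cohomology of a sheaf on a simplicial scheme (available in either the \'etale or the analytic topology; see \cite{deligne-hodge3, friedlander}):
$$E_1^{p,q} = H^q(X_p,\cF_p) \Longrightarrow H^{p+q}(X_\dt,\cF_\dt),$$
whose $d_1$ is the alternating sum of the pullbacks along the face maps. Applied to $X_\dt=\cosk(U\to X)_\dt$ with $\cF_\dt$ the pullback of $\cF$, additivity of cohomology on disjoint unions identifies
$$E_1^{p,q} \cong \prod_{\alpha_0,\ldots,\alpha_p} H^q(U_{\alpha_0}\cap\cdots\cap U_{\alpha_p},\cF) = C^p(\mathcal{U},\mathcal{H}^q(\cF)),$$
and hence $E_2^{p,q}\cong \check{H}^p(\mathcal{U},\mathcal{H}^q(\cF))$, where $\mathcal{H}^q(\cF)$ is the presheaf $V\mapsto H^q(V,\cF)$.

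The final step is to observe that this is precisely the $E_2$ page of the Čech-to-derived-functor spectral sequence for the open cover $\mathcal{U}$, which is well known to converge to $H^{p+q}(X,\cF)$ (Godement, or SGA in the \'etale case). Naturality of the augmentation $\cosk(U\to X)_\dt\to X$ gives a map of spectral sequences inducing the identity on $E_2$, hence an isomorphism on abutments.

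The only real subtlety is checking that the identification at $E_2$ is compatible with the comparison map of spectral sequences, and that the spectral sequence for the cohomology of a simplicial sheaf is formulated in the right way in both the \'etale and analytic contexts; but these are standard and can be cited rather than redone. The hardest point conceptually is just making sure one does not conflate Čech cohomology of the presheaf $\mathcal{H}^q(\cF)$ with that of its sheafification, since only the former appears in the $E_2$ terms here.
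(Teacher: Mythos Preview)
Your argument is correct and is the standard cohomological-descent proof: identify the simplicial spectral sequence with the \v{C}ech-to-derived spectral sequence for the cover and conclude. The paper itself does not give a proof at all---it simply cites \cite[prop~3.7]{friedlander} for the \'etale case and \cite[(5.3.7)]{deligne-hodge3} for the analytic---so your proposal is actually more detailed than what appears in the paper, and it unpacks precisely the content of those references.
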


\begin{proof}
 See \cite[prop 3.7]{friedlander} for the \'etale case, and  \cite[(5.3.7)]{deligne-hodge3} for the analytic. 
\end{proof}

A vector bundle (respectively Brauer-Severi scheme) over $X_\dt$ is a morphism of simplicial schemes $E_\dt\to X_\dt$ such that $E_n\to X_n$ is a vector bundle (respectively Brauer-Severi scheme) in the usual sense, and  such that the face and degeneracy maps fit into Cartesian diagrams
$$
\xymatrix{
 E_n\ar[r]\ar[d] & E_m\ar[d] \\ 
 X_n\ar[r] & X_m
}
$$
 Grothendieck's procedure for defining Chern classes   \cite{groth-chern, milne},
can be easily generalized to simplicial schemes. Here we briefly outline the construction.
Suppose that $\ell$ is prime and $X_\dt$ is a simplicial scheme over $\Z[1/\ell]$, then we have  Kummer sequence 
$$1\to \mu_{\ell^n}\to \mathbb{G}_m\to \mathbb{G}_m\to 1$$
on it \cite[p 66]{milne}.
The connecting map defines a first Chern class map
$$c_1: Pic(X_\dt)= H^1(X_\dt,\mathbb{G}_m) \to \varprojlim_n H_{et}^2(X_\dt, \mu_{\ell^n})=  H_{et}^2(X_\dt, \Z_\ell(1))$$
Let us suppose from now on that  $X_\dt$ is a simplicial variety over an algebraically closed field $k$ of characteristic different
from $\ell$. Then we can ignore the twist by identifying  $\Z_\ell(1)\cong \Z_\ell$.
Now suppose that $E_\dt$ is a simplical vector bundle of rank $n$ on  $X_\dt$, then $\PP(E_\dt)\to X_\dt$
is a  Brauer-Severi scheme. The collection of  tautological  bundles $\cO_{\PP(E_\dt)}(1)$ defines line bundle on $L_\dt$
on $\PP(E_\dt)$.

\begin{lemma}
There is an isomorphism of graded $H_{et}^*(X,\Q_\ell)$-modules
 $$H_{et}^*(\PP(E_\dt), \Q_\ell) \cong \bigoplus_{i=0}^{n-1}H_{et}^{*-2i}(X_\dt,\Q_\ell)\cup c_1(L_\dt)^i$$
\end{lemma}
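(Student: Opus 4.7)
Let $\pi\colon \PP(E_\dt)\to X_\dt$ be the projection. The natural map
$$
\Phi\colon \bigoplus_{i=0}^{n-1} H_{et}^{*-2i}(X_\dt,\Q_\ell) \longrightarrow H_{et}^*(\PP(E_\dt),\Q_\ell), \qquad (\alpha_0,\ldots,\alpha_{n-1})\mapsto \sum_{i=0}^{n-1} \pi^*\alpha_i\cup c_1(L_\dt)^i,
$$
is a map of graded $H_{et}^*(X_\dt,\Q_\ell)$-modules, and I want to show it is an isomorphism. The strategy is to compare, degree by simplicial degree, with the classical projective bundle formula using the standard descent spectral sequence for a simplicial scheme.

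For any simplicial scheme $Y_\dt$ and any \'etale sheaf $\cG_\dt$ on it, there is a convergent first-quadrant spectral sequence
$$
E_1^{p,q}(Y_\dt,\cG_\dt) = H_{et}^q(Y_p,\cG_p) \Longrightarrow H_{et}^{p+q}(Y_\dt,\cG_\dt),
$$
with $d_1$ the alternating sum of the maps induced by the face operators (this is standard; compare the cited references for the analytic and \'etale cases, and it is the spectral sequence associated to the skeletal filtration of $Y_\dt$). Apply this to $Y_\dt=X_\dt$ and to $Y_\dt=\PP(E_\dt)$ with constant coefficients $\Q_\ell$. In each fixed simplicial degree $p$, the scheme $\PP(E_p)\to X_p$ is an ordinary projective bundle associated to a vector bundle on a variety, so the classical projective bundle formula in $\ell$-adic cohomology yields
$$
H_{et}^q(\PP(E_p),\Q_\ell) \;\cong\; \bigoplus_{i=0}^{n-1} H_{et}^{q-2i}(X_p,\Q_\ell)\cup c_1(L_p)^i,
$$
where $L_p=\cO_{\PP(E_p)}(1)$.

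Next I compare differentials. Because $E_\dt\to X_\dt$ is a simplicial vector bundle, each face map $\alpha\colon X_n\to X_m$ induces a cartesian square, so the corresponding $\alpha\colon \PP(E_n)\to\PP(E_m)$ pulls $L_m$ back to $L_n$. Consequently $\alpha^*c_1(L_m)=c_1(L_n)$, and by naturality of pullback and cup product the differential $d_1$ on the $E_1$-page for $\PP(E_\dt)$ respects the decomposition above, so the $E_1$-page decomposes, as a bigraded complex, as the direct sum of $n$ copies of the $E_1$-page for $X_\dt$, shifted by $(0,2i)$ for $i=0,\ldots,n-1$. This decomposition persists to every subsequent page, hence to $E_\infty$, so the induced filtration on $H_{et}^*(\PP(E_\dt),\Q_\ell)$ splits into $n$ shifted copies of the corresponding filtration on $H_{et}^*(X_\dt,\Q_\ell)$.

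Finally, the map $\Phi$ defined above is a map of filtered modules inducing on the associated graded precisely the isomorphism just established. Hence $\Phi$ is an isomorphism on $E_\infty$, and since both spectral sequences are first-quadrant and converge to their respective abutments, $\Phi$ is itself an isomorphism. The main delicate point is the compatibility of the cup product with $c_1(L_\dt)$ with the simplicial differentials; this is where the hypothesis that the face and degeneracy squares for $E_\dt$ are cartesian is essential, as it ensures $L_\dt$ is a genuine simplicial line bundle and the $c_1(L_p)$ assemble into a single class $c_1(L_\dt)\in H_{et}^2(\PP(E_\dt),\Q_\ell)$.
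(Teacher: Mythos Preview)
Your proof is correct and follows essentially the same route as the paper: both use the descent spectral sequence $E_1^{p,q}=H_{et}^q(Y_p)\Rightarrow H_{et}^{p+q}(Y_\dt)$ for $Y_\dt=X_\dt$ and $Y_\dt=\PP(E_\dt)$, invoke the classical projective bundle formula on each level $p$ to get an isomorphism on $E_1$, and conclude by comparison of spectral sequences. Your write-up is in fact more detailed than the paper's, which simply cites Friedlander for the morphism of spectral sequences and Milne for the levelwise isomorphism; your explicit verification that the face maps pull $L_m$ back to $L_n$ (using the cartesian-square hypothesis) is exactly the content behind the paper's assertion that $L_\dt$ is a simplicial line bundle with a well-defined $c_1(L_\dt)$.
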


\begin{proof}
 By \cite[prop 2.4]{friedlander} there is a morphism of spectral sequences
 $$
\left.
\begin{array}{ccc}
 E_1 = \bigoplus_{i=0}^{n-1} H^{q-2i}(X_p,\Q_\ell)\cup c_1(L_p)^i & \Rightarrow &  \bigoplus_{i=0}^{n-1} H^{p+q-2i}(X_\dt, \Q_\ell)\cup c_1(L_\dt)^i \\ 
 \downarrow &  & \downarrow \\ 
  E_1 = H^q(\PP(E_p),\Q_\ell) & \Rightarrow & H^{p+q}(\PP(E_\dt), \Q_\ell)
\end{array}
\right.
$$ 
This is an isomorphism on the $E_1$ pages of the spectral sequences by a standard argument \cite[pp 272-273]{milne}. Therefore it is an isomorphism on the abutments.

 \end{proof}
 
Now one can define   higher Chern classes  $c_i(E_\dt)\in H^{2i}(X_\dt, \Q_\ell)$ using the relations
$$c_1(E_\dt) = c_1(\wedge^n E_\dt)$$
$$c_1(L_\dt)^n + \pi^* c_2(E_\dt)c_1(L_\dt)^{n-1} + \ldots =0$$
The same procedure works in the analytic case to define  $c_i(E_\dt)\in H^{2i}(X_\dt^{an},\Q)$,
with the Kummer sequence replaced by the exponential sequence.
We can define the reduced Chern classes $\cbar_i(P_\dt)$ of a Brauer-Severi scheme by using \eqref{eq:brauerC_i}. These classes are functorial,
so as a corollary to lemma \ref{lemma:descent}, we obtain

\begin{cor}
 Let $U\to X$ be the disjoint union of the sets of a Zariski open cover $\mathcal{U}$. Suppose that $E$ is a vector bundle
 on $X$ and let $E_\dt$ denote its pull back to $\cosk(U\to X)_\dt$.
 Under the isomorphism
 $$H^{2i}(\cosk(U\to X)_\dt) \cong H^{2i}(X)$$
 $c_i(E_\dt)$ (respectively $\cbar_i(E_\dt)$) corresponds to  $c_i(E)$ (respectively $\cbar_i(E)$). 
\end{cor}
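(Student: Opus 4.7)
The statement is essentially that the Chern-class construction of the previous paragraphs is natural with respect to morphisms of simplicial schemes, and the plan is to verify this naturality directly. Let $\epsilon : \cosk(U\to X)_\dt \to X$ denote the augmentation; by Lemma \ref{lemma:descent}, $\epsilon^*$ realizes the given cohomology isomorphism. By construction $E_\dt = \epsilon^*E$, and $\PP(E_\dt)\to \cosk(U\to X)_\dt$ is the pullback of $\pi\colon \PP(E)\to X$ along $\epsilon$. Writing $\tilde\epsilon : \PP(E_\dt)\to \PP(E)$ for the induced augmentation on projective bundles, the tautological line bundle satisfies $L_\dt = \tilde\epsilon^*\cO_{\PP(E)}(1)$.

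The first point is that $c_1$ commutes with $\tilde\epsilon^*$: the Kummer (respectively exponential) sequence on $\PP(E)$ pulls back term by term to the corresponding sequence on $\PP(E_\dt)$, and naturality of the connecting homomorphism yields $c_1(L_\dt) = \tilde\epsilon^*c_1(\cO_{\PP(E)}(1))$. The same argument, together with the fact that $\wedge^n$ commutes with pullback, gives $c_1(E_\dt) = c_1(\wedge^n E_\dt) = \epsilon^*c_1(E)$.

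For the higher $c_i$, I would pull back Grothendieck's defining relation
$$
c_1(\cO_{\PP(E)}(1))^n + \pi^*c_2(E)\,c_1(\cO_{\PP(E)}(1))^{n-1} + \cdots = 0
$$
on $\PP(E)$ along $\tilde\epsilon^*$; this gives the analogous relation on $\PP(E_\dt)$ with each $c_i(E)$ replaced by $\epsilon^*c_i(E)$. The preceding lemma supplies the simplicial projective bundle decomposition, so the coefficients in such a relation are uniquely determined, forcing $c_i(E_\dt)=\epsilon^*c_i(E)$.

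Finally, the reduced version follows from \eqref{eq:brauerC_i}: since $\pi$ is smooth and proper, $\pi_*\omega^{-1}_{\PP(E)/X}$ commutes with base change, so $\pi_*\omega^{-1}_{\PP(E_\dt)/\cosk(U\to X)_\dt} \cong \epsilon^*\pi_*\omega^{-1}_{\PP(E)/X}$; substituting into \eqref{eq:brauerC_i} and applying the previous step yields $\cbar_i(E_\dt) = \epsilon^*\cbar_i(E)$. The only point requiring care is the uniqueness of the coefficients in the projective bundle relation in the simplicial setting, but this is immediate from the preceding lemma, so no real obstacle arises.
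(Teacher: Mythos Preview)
Your argument is correct and follows the paper's approach: the paper deduces the corollary from the single remark that the classes $c_i$ and $\cbar_i$ are functorial for morphisms of simplicial schemes, together with Lemma~\ref{lemma:descent}; you have simply unpacked that functoriality (naturality of the Kummer/exponential connecting map, pullback of Grothendieck's relation, and base change for $\pi_*\omega^{-1}$). One minor simplification: for the reduced classes you could also just invoke \eqref{eq:cbar}, since once $c_i(E_\dt)=\epsilon^*c_i(E)$ the polynomial expression gives $\cbar_i(E_\dt)=\epsilon^*\cbar_i(E)$ immediately.
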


\section{Main theorem}

\begin{theorem}\label{thm:main1}
 \-
\begin{enumerate}
 \item When $k=\C$,
 there exists Hodge classes $\uc_j\in H^{2j}((C\times \fM_n^s(C))^{an},\Q), j=2,\ldots n$ which are universal reduced Chern classes; more precisely
  given a $\C$-scheme $Y$ of finite type  and $E\in \St(Y)$, $\eta_E^*\uc_j = \cbar_j(E)$.
  \item When $k$ is an arbitrary algebraically closed field, there exists classes $\uc_j\in H_{et}^{2j}(C\times \fM_n^s(C),\Q_\ell), j=2,\ldots n$  such that 
 for a $k$-scheme $Y$ and $E\in \St(Y)$, $\eta_E^*\uc_j = \cbar_j(E)$.
\end{enumerate}
   
\end{theorem}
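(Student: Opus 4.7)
The plan is to define the universal classes directly as reduced Chern classes of the Brauer-Severi scheme $\fP$ furnished by Proposition 2.1. Writing $X := C \times \fM_n^s(C,L)$ and $F := \pi_*\omega_{\fP/X}^{-1}$, set
$$\uc_j := \phi_j(c_2(F),\ldots, c_n(F)),$$
interpreted in $H^{2j}(X^{an},\Q)$ in case (1) and in $H^{2j}_{et}(X,\Q_\ell)$ in case (2), following the recipe \eqref{eq:brauerC_i} of Section \ref{section:BS}. In the complex case, since the $c_i(F)$ are Chern classes of an algebraic vector bundle on the smooth quasi-projective variety $X$, they are of pure Hodge type $(i,i)$; hence any rational polynomial in them is a Hodge class, which settles the Hodge-class assertion for free.

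The substantive content is the following geometric claim, which I would establish next: for any $E \in \St(Y)$, there is a canonical isomorphism of Brauer-Severi schemes
$$(1_C \times \eta_E)^* \fP \,\cong\, \PP(E) \quad\text{on } C \times Y.$$
Granting this, functoriality of the construction \eqref{eq:brauerC_i} under pullback, combined with the identity $\cbar_j(\PP(E)) = \cbar_j(E)$ recorded at the end of Section \ref{section:BS}, yields
$$\eta_E^* \uc_j \,=\, \cbar_j\bigl((1_C \times \eta_E)^*\fP\bigr) \,=\, \cbar_j(\PP(E)) \,=\, \cbar_j(E),$$
which is precisely the universality assertion in both parts of the theorem.

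To establish the Brauer-Severi isomorphism I would argue locally on $Y$. Because $pr_{2*}E(m)$ is a vector bundle of rank $N$ on $Y$, a choice of local framing trivializes it over a Zariski cover $\{U_\alpha\}$ of $Y$ and produces lifts $\tilde\eta_\alpha : U_\alpha \to Q^s$ of $\eta_E|_{U_\alpha}$ with $(1_C \times \tilde\eta_\alpha)^*\cE \cong E|_{C \times U_\alpha}$. Passing to projectivizations and using $\PP(\cE) = p^*\fP$ from Proposition 2.1 gives local isomorphisms
$$(1_C \times \eta_E|_{U_\alpha})^*\fP \,\cong\, \PP(E)|_{C \times U_\alpha}.$$
On overlaps the two framings, and hence the two lifts $\tilde\eta_\alpha, \tilde\eta_\beta$, differ by a $\PGL_N$-cocycle (the center of $\GL_N$ acts trivially on projectivizations), and $\PP(\cE)$ is $\PGL_N$-equivariant by construction, so the local isomorphisms are compatible and glue to the required global one.

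The step I expect to be most delicate is the descent bookkeeping in the last paragraph: one must verify that the $\PGL_N$-descent data used in Proposition 2.1 to manufacture $\fP$ from $\PP(\cE)$ genuinely agrees with the tautological descent data assembling the pieces $\PP(E)|_{C \times U_\alpha}$ into $\PP(E)$. Once this is done -- both reduce to the same $\PGL_N$-cocycle coming from the comparison of local framings -- the remainder of the argument is formal, and the $\ell$-adic case of (2) runs verbatim with singular cohomology replaced by étale cohomology as developed in Section 5.
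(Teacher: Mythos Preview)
Your proposal is correct, and the definition of $\uc_j$ as $\cbar_j(\fP)$ is exactly the paper's. The difference lies in how universality is verified. You aim to produce a global isomorphism of Brauer--Severi schemes $(1_C\times\eta_E)^*\fP\cong \PP(E)$ on $C\times Y$ by gluing the local identifications coming from local lifts $\tilde\eta_\alpha:U_\alpha\to Q^s$, and then invoke functoriality of $\cbar_j$. The paper instead bypasses the geometric gluing: it packages the local lifts into a morphism of simplicial schemes $g_\dt:\cosk(U\to Y)_\dt\to \cosk(Q^s\to \fM_n^s)_\dt$, pulls back $\fP$ and $E$ to these coskeleta, and uses the descent isomorphism $H^*(\cosk(U\to Y)_\dt)\cong H^*(Y)$ (Lemma~\ref{lemma:descent} and its Corollary) to conclude at the level of cohomology classes. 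Your route is more elementary and would render Section~5 unnecessary for this theorem, at the price of the descent bookkeeping you flag as delicate: one must check that the local isomorphisms $\Psi_\alpha$ agree on overlaps, which amounts to verifying that the $\PGL_N$-equivariant structure on $\PP(\cE)$ used to descend to $\fP$ matches the change-of-framing transitions between the $\psi_\alpha$'s. This is true essentially by construction of the $\GL_N$-action on $\cE$, but writing it out carefully is where the work in your approach lives. The paper's simplicial argument trades that verification for the machinery of Section~5, which handles the compatibility automatically at the cohomological level without ever asserting a global isomorphism of schemes.
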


\begin{proof}
We define $\uc_j =\cbar_j(\fP)$. When $k=\C$, this is a Hodge class  of weight $2j$ by \cite[Cor.~9.1.3]{deligne-hodge3}.
 The object $E$  is a family of semistable bundles over $C\times Y$. 
Suppose that we can lift $\eta_E$ to a morphism $g:Y\to Q^s$ such that $g^*\cE \cong E$.  Then we immediately get that
 $$\cbar_j(E) = g^*\cbar_j(\cE) = \eta_E^*\uc_j$$
 as desired.
 
In general, the lifts only exist locally so the argument is a bit more involved. To be more precise,
 we can choose a Zariski open cover $\{U_i\}$ of $Y$ and  surjections $p_1^*\cO_C(-m)|_{U_i}^N\to E|_{C\times U_i}$,
 where $m,N$ are constants used in the definition of $Q=\Quot(\cO_C(-m)^N, h(t))$. Let $U= \coprod_i U_i\xrightarrow{\pi} Y$.
 By the universal property of $Q^s\subset Q$,
 we obtain a morphism $g:U\to Q^s$ such that $g^*\cE \cong \pi^*E$ and $\eta_E\circ \pi = p\circ g$.
 This extends to a map of simplicial schemes
$$g_\dt: \cosk(U\to Y)_\dt \to \cosk(Q^s\to\fM_n^s(C))_\dt $$
 such that $g_\dt ^*\fP_\dt  \cong \PP(E_\dt)$, where $\fP_\dt $ (respectively $E_\dt$)
  denotes the Brauer-Severi scheme (respectively vector bundle) obtained by pulling 
  back $\fP$ (respectively $E$) to the simplicial scheme on the right (left).
 Therefore
 $$\cbar_j(E) = \cbar_j(E_\dt) =  g^*\cbar_j(\fP_\dt) = \eta_E^*\uc_j$$

\end{proof}

\section{Interpretation using stacks}

We want to explain the interpretation of the main theorem in terms of stacks described in the introduction.
We will not define stacks here, but instead refer to  \cite{heinloth} for the relevant background.
But in very rough terms, a major difference between schemes and stacks, is that the former  represent 
set valued functors and the latter  groupoid valued
functors.  This avoids some problems of nonrepresentability of moduli functors by schemes caused by automorphisms.
 In particular, we have stacks $\bM_n^s(C,L)$ (respectively $\bM_n^\ss(C,L)$)  which represents the functor which to any $k$-scheme $T$
assigns the groupoid of families of stable (resp. semistable) vector bundles of rank $n$ with determinant $L$ on $C$ parameterized by $T$.
These can be constructed  as quotient stacks
$\bM_n^s(C,L)=[Q^s/GL_N(k)]$ and $\bM_n^\ss(C,L)=[Q^\ss/GL_N(k)]$.
(To avoid any confusion, recall that $N$ is  not $n$ but rather the dimension of the space of global sections of a suitable twist of $E\in Q^\ss$.)
Also by construction,
we get  induced $1$-morphisms $ \bM^{s}_n(C,L)\to \fM^s_n(C,L)$ and $ \bM^{\ss}_n(C,L)\to \fM_n(C,L)$.
For any $k$-scheme $T$ with $E\in \SS(T)$, $\eta_E$ factors through  $\bM^{\ss}_n(C,L)$.

The $\ell$-adic cohomology of a quotient stack $[X/G]$, such as $\bM_n^s(C,L)$, can be defined as the cohomology of the simplicial
scheme
$$  (EG_\bullet\times X)/G$$
where 
$$EG_\bullet =\ldots G\times G\rightrightarrows G\to \Spec k$$
is constructed in \cite[\S 6.1]{deligne-hodge3}. This is just a version of the Borel construction or homotopy quotient, and so cohomology
of $[X/G]$ is nothing but $G$-equivariant cohomology of $X$. The quotient stack is $[\Spec k/GL_n(k)]$ is called the classifying
stack $\mathbb{B} GL_n(k)$. 
Its cohomology is the polynomial ring $\Q_\ell[c_1,\ldots, c_n]$ in universal Chern classes.
Arguments similar to those used in section \ref{section:reducedChern} show that the cohomology of $\mathbb{B} PGL_n(k)$ is
a polynomial ring in the  universal reduced Chern classes $\cbar_2,\ldots, \cbar_n$.

Let $E=\operatorname{\bf Spec} \cE^\vee \xrightarrow{\pi} C\times Q^s$ denote geometric vector bundle associated to $\cE$.
This has  an action of  $GL_N(k)$ compatible with the action on $C\times Q^s$.
The quotient $\bE=[E/GL_N(k)]$ can be viewed as a rank $n$ vector bundle on $C\times \bM^{s}_n(C,L)$.
The bundle  $\bE$  defines a $1$-morphism $C\times \bM^s(C,L)\to \mathbb{B}GL_n(k)$, the pullbacks of $c_i$ yields the Chern classes
$$c_j(\bE)\in H_{et}^{2j}(C\times \bM_n^s(C,L),\Q_\ell)$$
In particular, we can define the reduced Chern classes $\cbar_j(\bE)$ using \eqref{eq:cbar}.
Theorem \ref{thm:main1}  can be reformulated as:

\begin{theorem}
 The reduced Chern classes $\cbar_j(\bE)\in H_{et}^{2j}(C\times \bM_n^s(C,L),\Q_\ell)$ lift to classes 
$\uc_j\in H_{et}^{2j}(C\times \fM_n^s(C),\Q_\ell)$.
\end{theorem}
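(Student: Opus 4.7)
The plan is to show that the same classes $\uc_j=\cbar_j(\fP)$ constructed in Theorem \ref{thm:main1} already do the job, i.e.\ that their pullback along the natural $1$-morphism $\pi\colon C\times\bM_n^s(C,L)\to C\times\fM_n^s(C,L)$ agrees with $\cbar_j(\bE)$. Since $\uc_j$ is defined by applying the universal polynomial $\phi_j$ to Chern classes of $\pi_*\omega_{\fP/(C\times\fM_n^s)}^{-1}$, and since the same polynomial applied to $\pi_*\omega_{\PP(\bE)/(C\times\bM_n^s)}^{-1}$ computes $\cbar_j(\bE)$, it suffices to establish an isomorphism $\pi^*\fP\cong\PP(\bE)$ of Brauer--Severi schemes on $C\times\bM_n^s(C,L)$ and invoke functoriality of the Chern class formalism.

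To do this I would first factor $\pi$ through the intermediate stack $[C\times Q^s/\PGL_N(k)]$, using the quotient map $\GL_N(k)\to \PGL_N(k)$. Because $Q^s\to\fM_n^s(C,L)$ is a principal $\PGL_N$-bundle, the induced $1$-morphism $[C\times Q^s/\PGL_N(k)]\to C\times\fM_n^s(C,L)$ is an isomorphism of stacks, and one obtains the factorization
\begin{equation*}
C\times\bM_n^s(C,L)=[C\times Q^s/\GL_N(k)]\longrightarrow [C\times Q^s/\PGL_N(k)]\xrightarrow{\sim}C\times\fM_n^s(C,L).
\end{equation*}
By the very construction of $\fP$ (Proposition on descent in Section 2), its pullback to $[C\times Q^s/\PGL_N(k)]$ is the $\PGL_N$-equivariant Brauer--Severi scheme $\PP(\cE)$. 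Pulling this further back to $[C\times Q^s/\GL_N(k)]$ amounts to restricting the equivariance along $\GL_N(k)\to\PGL_N(k)$, which is precisely the $\GL_N$-equivariant structure on $\PP(\cE)$ induced by the linear action on $\cE$. This equivariant datum is by definition $\PP(\bE)$, establishing $\pi^*\fP\cong\PP(\bE)$.

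Once the bundle identification is in place, the equality $\pi^*\uc_j=\cbar_j(\bE)$ follows by combining two functorialities. First, the Chern class formalism of Section 4, carried out on the standard simplicial model $(EGL_N(k)_\bullet\times Q^s)/\GL_N(k)$ of $\bM_n^s(C,L)$, is compatible with pullback along maps of simplicial schemes, so $\pi^*\cbar_j(\fP)=\cbar_j(\pi^*\fP)=\cbar_j(\PP(\bE))$. Second, Lemma \ref{lemma:SnE} together with \eqref{eq:brauerC_i} gives $\cbar_j(\PP(\bE))=\cbar_j(\bE)$ for the rank $n$ vector bundle $\bE$, because the computation only uses $S^n\bE\otimes\det(\bE)^{-1}$, which is functorial on the stack.

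The main technical obstacle is the bookkeeping needed to translate the scheme-theoretic construction of $\fP$ and the simplicial-scheme definitions of Chern classes in Section 4 into the two different simplicial models arising here: the Borel construction $(EGL_N(k)_\bullet\times Q^s)/\GL_N(k)$ computing $H^*_{et}(\bM_n^s(C,L),\Q_\ell)$, and the \v Cech-type simplicial scheme $\cosk(Q^s\to\fM_n^s(C,L))_\bullet$ used implicitly in the proof of Theorem \ref{thm:main1}. Both receive a map from $(GL_N(k)^\bullet\times Q^s)$ via $\GL_N\to\PGL_N$, and one has to check that the pullback of $\fP$ through this comparison agrees, level by level, with $\PP(\bE)$; once that is done, the equality of Chern classes is formal, and the theorem drops out.
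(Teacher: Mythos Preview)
Your proposal is correct and is exactly the argument the paper has in mind: the paper offers no separate proof of this theorem, presenting it simply as a reformulation of Theorem~\ref{thm:main1}, and your write-up makes explicit the one point that needs to be checked for the reformulation to go through, namely that $\pi^*\fP\cong\PP(\bE)$ so that $\pi^*\uc_j=\cbar_j(\bE)$ by functoriality. The bookkeeping you flag between the Borel-construction model and the coskeleton model is standard and poses no real difficulty, since $Q^s\to\fM_n^s$ is a $\PGL_N$-torsor and hence the two simplicial schemes have the same cohomology.
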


Finally to round things out, we describe the precise  relationship between the cohomology of the moduli space and moduli stack.
We can identify $\fM_n^s(C,L) = [Q^s/PGL_n(k)]$. The map $ \bM^{s}_n(C,L)\to \fM^s_n(C,L)$ is a $\mathbb{G}_m$-gerbe \cite[ex 3.7, ex 3.9]{heinloth}, which
for our purposes means that it behaves like a fibration with $B\mathbb{G}_m$ as fibre.
If we disregard the  the multiplicative structure, then it behaves like a product (see \cite[thm 5.4]{dhillon}).

\begin{proposition}
 As a graded vector space, we have an isomorphism
 $$H_{et}^*(\bM^s_n(C,L), \Q_\ell) \cong H_{et}^*(\fM_n^s(C,L), \Q_\ell) \otimes H_{et}^*(B\Gm, \Q_\ell)$$
 where  $H_{et}^*(B\Gm, \Q_\ell)$ is polynomial ring generated by a degree 2 class.
\end{proposition}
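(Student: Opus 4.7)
The plan is to identify $\pi:\bM^s_n(C,L)\to \fM^s_n(C,L)$ as a $B\Gm$-fibration and then invoke a Leray--Hirsch argument. First I would recall that $\bM^s_n(C,L)=[Q^s/\GL_N(k)]$ whereas $\fM^s_n(C,L)=[Q^s/\PGL_N(k)]$ agrees with the scheme quotient $Q^s/\PGL_N(k)$, because $\PGL_N$ acts freely on $Q^s$. Combined with the exact sequence $1\to\Gm\to \GL_N\to\PGL_N\to 1$ and the fact that the central $\Gm\subset \GL_N$ acts trivially on $Q^s$, this exhibits $\pi$ as a $\Gm$-gerbe, and in particular as a fibration of the associated simplicial models with fibre $B\Gm$.

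Next I would set up the Leray spectral sequence
$$E_2^{p,q}=H^p_{et}(\fM^s_n(C,L),R^q\pi_*\Q_\ell)\Longrightarrow H^{p+q}_{et}(\bM^s_n(C,L),\Q_\ell).$$
Since the gerbe is étale-locally trivial and $\Gm$ is connected, the monodromy acts trivially on the fibre cohomology $H^*_{et}(B\Gm,\Q_\ell)$, so $R^q\pi_*\Q_\ell$ is the constant sheaf with value $H^q_{et}(B\Gm,\Q_\ell)$.

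The main step is to force degeneration at $E_2$ by producing a class in $H^2_{et}(\bM^s_n(C,L),\Q_\ell)$ that restricts on every geometric fibre to the canonical generator of $H^2_{et}(B\Gm,\Q_\ell)$. Fix a $k$-point $p_0\in C$ and consider the line bundle $\det(\bE|_{\{p_0\}\times \bM^s_n(C,L)})$ on $\bM^s_n(C,L)$. The central $\Gm\subset\GL_N$ acts on $\cE$ by scalar multiplication (inherited from its action on $\cO_C(-m)^N$), hence with weight $1$ on $\bE|_{\{p_0\}}$ and with weight $n$ on its determinant. Since $n$ is invertible in $\Q_\ell$, the class
$$\xi := \frac{1}{n}c_1\!\left(\det\bE|_{\{p_0\}\times \bM^s_n(C,L)}\right)$$
therefore restricts on each fibre to the canonical generator of $H^2_{et}(B\Gm,\Q_\ell)$.

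Finally, the powers $\xi^i$ restrict on each fibre to a $\Q_\ell$-basis of $H^*_{et}(B\Gm,\Q_\ell)$, so Leray--Hirsch forces $E_2=E_\infty$ and identifies $H^*_{et}(\bM^s_n(C,L),\Q_\ell)$ with the free $H^*_{et}(\fM^s_n(C,L),\Q_\ell)$-module generated by the $\xi^i$, yielding the claimed graded vector space isomorphism (the multiplicative structure is discarded because $\xi^i$ need not satisfy a simple relation in terms of classes pulled back from $\fM^s_n(C,L)$). I expect the main obstacle to be the careful bookkeeping required to run all of this at the level of the simplicial models computing stack cohomology: justifying the Leray spectral sequence, the triviality of the monodromy, and the Leray--Hirsch conclusion each amount to translating standard arguments for $B\Gm$-fibrations into the simplicial scheme framework of section~\ref{section:BS}.
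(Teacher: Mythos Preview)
Your argument is correct, but note that the paper does not actually supply its own proof of this proposition: it simply observes that $\bM^s_n(C,L)\to\fM^s_n(C,L)$ is a $\Gm$-gerbe and cites \cite[thm~5.4]{dhillon} for the additive splitting of the cohomology. Your Leray--Hirsch argument is the standard route to such a statement and is presumably close in spirit to what Dhillon does. The explicit class $\xi=\tfrac{1}{n}c_1(\det\bE|_{\{p_0\}\times\bM^s_n})$ is a pleasant concrete witness to the degeneration, and your weight computation is correct: the central $\Gm\subset\GL_N$ scales the universal surjection $\cO_C(-m)^N\twoheadrightarrow\cE$, hence acts with weight $1$ on $\cE$ and weight $n$ on its determinant along any fibre. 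Your caveat about porting the Leray spectral sequence, the constancy of $R^q\pi_*\Q_\ell$, and the multiplicative degeneration argument to the simplicial setting is fair but routine; the citation to Dhillon is there precisely to outsource that bookkeeping.
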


\end{document}